\newcommand{\C}{{\mathbb C}}
\newcommand{\F}{{\mathbb F}}
\newtheorem{theorem}{Theorem}[section]
\newtheorem{lemma}[theorem]{Lemma}
\newtheorem{corollary}[theorem]{Corollary}
\newtheorem{proposition}[theorem]{Proposition}
\newtheorem{definition}[theorem]{Definition}
\def\cal{\mathcal}
\newcommand{\call}[0]{{\cal L}}
\newcommand{\calg}[0]{{\cal G}}
\newcommand{\cala}[0]{{\cal A}}
\newcommand{\calm}[0]{{\cal M}}
\begin{document}

\title[A Green imprimitivity theorem]{An elementary Green imprimitivity theorem for inverse semigroups}
\author[B. Burgstaller]{Bernhard Burgstaller}
\address{Departamento de Matematica, Universidade Federal de Santa Catarina, CEP 88.040-900 Florian\'opolis-SC, Brasil}
\email{bernhardburgstaller@yahoo.de}
\subjclass{46L55, 20M18, 46L08}
\keywords{imprimitivity theorem, inverse semigroup, induction, crossed product, Morita equivalence}

\begin{abstract}
A Morita equivalence similar to that found by Green for crossed products by groups will be established for crossed products by inverse semigroups.
More precisely,
let $G$ be an inverse semigroup, $H$ a finite sub-inverse semigroup of $G$ and $A$ a $G$-algebra or a $H$-algebra.
Then the crossed product $A \rtimes H$ is Morita equivalent
to a certain crossed product $B \rtimes G$.
\end{abstract}

\maketitle

\section{Introduction}

In a classical paper \cite{green1978}, Green showed that for a closed subgroup $H$ of a locally compact group $G$,
and a $G$-algebra $A$ there exits a Morita equivalence between $A \rtimes H$ and
$C_0(G/H,A) \rtimes G$ via an imprimitivity bimodule over these algebras (\cite[Prop. 3]{green1978}).
This useful result was discussed and generalized in many directions, for example, in
\cite{raeburn1988, vaes2005, huefraeburnwilliams2003, echterhoffkaliszewski2006}. 


In this note we shall
establish an analogous imprimitivity theorem 
for
an inverse semigroup $G$ and a finite sub-inverse semigroup $H \subseteq G$ 
for crossed products in Sieben's sense \cite{sieben1997}.
As a corollary of this, we show this holds true also for a given $H$-algebra $A$, and thus
this may be usefully combined with induction like in Kasparov \cite{kasparov1995,kasparov1988}.
Actually, this note was motivated by the fact that
the Baum--Connes map \cite{baumconneshigson1994} for groups $G$ is a kind of extrapolation of Green--Julg isomorphisms
for crossed products by $G$ of induced algebras by compact subgroups $H \subseteq G$,
as noted by Meyer and Nest in \cite{meyernest2006}.
In establishing that, Kasparov's induction plays a fundamental role. 
To potentially carry 
this result  
over from groups to inverse semigroups,
we need induction for compact (and thus finite) sub-inverse semigroups $H \subseteq G$,
and this is now provided in this note. 
Actually, in the meanwhile we have made considerable progress in this direction
and were able to establish a Baum--Connes map for fibered $G$-algebras
\cite{burgAttempts,burgNoteBC} founding on this note.

We are going to give a brief summary of 
this article. 
At first we rewrite the inverse semigroup crossed product $A \rtimes H$
as a groupoid crossed product $A \rtimes \calg$ to have a group-like construction.
Then we adapt and follow Green's proof \cite[p. 199-204]{green1978} in a natural way. The action on a certain quotient space $G_\calg/\calg$
($G/H$ in Green \cite{green1978})
is similar to the regular representation action by Khoshkam and Skandalis \cite{khoshkamskandalis2004}.
After establishing Green's imprimitivity Theorem \ref{theoremImprimitivity}, we apply it to the induced algebra
(in the sense of Kasparov \cite{kasparov1995,kasparov1988}) $A$ of a $H$-algebra $D$, and restrict to ideals
to get the second Green imprimitivity theorem, Corollary \ref{greenImp2}.




\section{Preparing definitions and crossed products}

We begin by recalling crossed products in the sense of Khoshkam and Skandalis \cite{khoshkamskandalis2004} and Sieben \cite{sieben1997},
but use several notions from \cite{burgiDescent}. Let $G$ denote an inverse semigroup.

\begin{definition}
{\rm
A {\em $G$-algebra} $A$ is a $C^*$-algebra $A$ endowed with a $G$-action in the following sense:
there exists a semigroup homomorphism $\alpha: G \rightarrow \mbox{End}(A)$, written as $g(a):= \alpha_g(a)$, such that
\begin{equation}  \label{sc2}
g g^*(a) b = a g g^*(b)
\end{equation}
for all $a,b \in A$ and $g \in G$.
}
\end{definition}

Such a $G$-algebra (whose definition is equivalent to \cite[Def. 3.1]{burgiDescent}) is a special case of $G$-algebras in the sense of \cite{sieben1997} and \cite{khoshkamskandalis2004}.



\begin{definition}
{\rm
Let $\F(G,A)$, or $\F$ for brevity, be the universal $*$-algebra over $\C$ generated by disjoint copies of $A$ and $G$ 
subject to the relations that
the $*$-algebraic relations of $A$ are respected, the multiplication and involution of $G$ are respected, and
the relations
\begin{equation}   \label{definingrelations}
g(a) g g^* = g a g^*, \quad g g^* a = a g g^*
\end{equation}
hold true for all $a \in A$ and $g \in G$.
We shall identify $G$ and $A$ as subsets of $\F$.
The {\em algebraic crossed product} $A \rtimes_{alg} G  \subseteq \F$ denotes the linear span of all elements of the form $a g$
($a \in A, g \in G$), which are usually denoted by $a \rtimes g$, and is a $*$-subalgebra of $\F$.
}
\end{definition}

\begin{definition}
{\rm
We denote by $G_0 \subseteq G$ the idempotent elements of $G$, and by $E(G) \subseteq \F$ the set of all projections
of the form $e_0 (1-e_1) \ldots (1-e_n) \in \F$ with $e_0,\ldots , e_n \in G_0$ and $n \ge 0$.
The subset $G_E:= \{g p \in \F\,|\, g \in G ,  p \in E(G)\} \subseteq \F$ is an inverse semigroup in $\F$ (under multiplication and  taking adjoint).
We even shall write $a \rtimes g:=a g$ when $a \in A$ and $g \in G_E$.
}
\end{definition}

Note that then the identities
\begin{equation}   \label{sc1}
a \rtimes g p = a g p = g g^* a g g^* g p =  g g^*(a) g p
=  g g^*(a) \rtimes g p
\end{equation}
hold in $\F$ for all $a \in A$ and $g p \in G_E$ ($g \in G, p \in E(G)$).

\begin{definition}
{\rm
Hence, it is natural to call 
an expression $a \rtimes g p \in \F$ with $g \in G,p \in E(G)$ and 
$a \in A_{g g^*} := g g^*(A)$ 
to be {\em standard}.
}
\end{definition}


The reader should be cautioned that the first relation of (\ref{definingrelations})
is {\em not} true in general for $g \in G_E$
(consider for example $(1-e)(a)=0$ for the trivial $G$-action on $A$), however the second relation of (\ref{definingrelations})
and identity (\ref{sc2}) remain true for $g \in G_E$ (see Lemma \ref{lemma1} below).

The {\em full crossed product} $A \rtimes G$ is the closure of the image of $A \rtimes_{alg} G$ under the universal $*$-representation $\pi$ of $\F$ on Hilbert space (\cite[Def. 5.4]{khoshkamskandalis2004} or \cite[5.16, 6.2, 8.4]{burgiDescent}).  
It is easy to see with the reduced representations \cite[p. 271]{khoshkamskandalis2004} that
$\pi$ is injective on $A \rtimes_{alg} G$, and so the latter is a pre-$C^*$-algebra
with a $C^*$-norm.
Sieben's crossed product $A \widehat \rtimes G$ is defined to be the image of $A \rtimes_{alg} G$
under the universal $*$-representation $\tau$ of $\F$ on Hilbert space satisfying
$\tau(g(a) - g a g^*) = 0$ (see \cite{sieben1997}).
We write $a \widehat \rtimes g$ for $\tau(a g)$.
Note, in particular,
that $\widehat \rtimes$ is compatible:
\begin{equation} \label{compat}
e(a) \widehat \rtimes g \;=\; a \widehat \rtimes e g  \qquad \forall a \in A,\, g \in G_E, \,e \in E(G).
\end{equation}
Notice that this identity is not true for $\rtimes$, and this compatibility
is actually the essential difference between
the full crossed product and Sieben's crossed product.

\begin{definition}   \label{defH}
{\rm
Let us now be given a finite sub-inverse semigroup $H' \subseteq G$ of $G$. Denote by $H$ the (finite) groupoid associated to $H'$ (cf. \cite{0913.22001}).
More precisely, let $H^{(0)} \subseteq \F$ be the set of all nonzero {\em minimal} projections of $E(H')$ and
$$H \; =\; \{ t e \in \F\,|\, t \in H', e \in H^{(0)}\} \backslash \{0\} \quad \subseteq \F.$$
The multiplication within $H$ is that inherited from $\F$.
}
\end{definition}

\begin{definition}
{\rm
Define
$$G_H \;=\; \{ g e \in \F\,|\, g \in G, e \in H^{(0)}, g^* g \ge e\} \backslash \{0\} \quad \subseteq \F.$$
We endow $G_H$ with an equivalence relation:
$g \equiv h$ if and only if there exists $t \in H$ such that $g t = h$ ($g,h \in G_H$).
We denote by $G_H/H$ the set-theoretical quotient of $G_H$ by $\equiv$.
}
\end{definition}

We shall exclusively work with representatives in this quotient; writing $g \in G_H/H$ means implicitly that $g \in G_H$
and we use no class brackets;
if then $g \in G_H$ is meant or the class $g \in G_H/H$ becomes apparent from the context.
For an assertion $\cala$ we let $[\cala]$ be the real number $0$ if $\cala$ is false, and $1$ if $\cala$ is true.

\begin{definition}
{\rm
Let $C_0(G_H/H)$ denote the commutative $C^*$-algebra of (continuous) complex-valued functions vanishing at infinity of the (discrete) set $G_H/H$
with the pointwise operations. 
The delta function $\delta_g$ in $C_0(G_H/H)$ is denoted by $g$ ($g \in G_H/H$).
The algebra $C_0(G_H/H)$ is endowed with the $G$-action $g(h):= [g h \in G_H] \,\, g h$, where $g \in G$ and $h \in G_H/H$
(of course, $g h \in G_H$ is equivalent to $g^* g \ge h h^*$).
We let $A \otimes C_0(G_H/H)$ be the $C^*$-algebraic tensor product endowed with the diagonal action by $G$.
}
\end{definition}

\begin{lemma}  \label{lemma1}
\begin{itemize}

\item[(i)]
If $g_1, \ldots,g_n \in G_H$ are mutually different 
then
$\sum_{i=1}^n a_i \rtimes g_i = 0$ (sum of standard elements) implies $a_1 = \ldots = a_n = 0$.

\item[(ii)]
The $G$-action on a $G$-algebra $A$ extends naturally to an inverse semigroup $G_E$-action on $A$
(i.e. one sets $(1-e)(a):= a - e(a)$ for all $a \in A$ and $e \in E$).

\item[(iii)]
The formulas $(a \rtimes g) (b \rtimes h) = a g(b) \rtimes g h$ and $(b \rtimes h)^* = h^*(b^*) \rtimes h^*$
hold in $\F$ for all $g, h \in G_E$, $a \in A_{g g^*} := g g^*(A)$ and $b \in A$.

\end{itemize}

\end{lemma}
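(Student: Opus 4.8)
Although the lemma is phrased as (i)--(iii), the natural logical order is to treat (ii) and (iii) first, which are purely algebraic identities inside $\F$, and then (i), which is the linear independence of standard elements and is really a statement about the size of $A\rtimes_{alg}G$.

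\emph{Part (ii).} From $[gg^*,a]=0$ and $gg^*(a)\,b=a\,gg^*(b)$ one reads off that each idempotent $e\in G_0$ acts on $A$ by multiplication with a central projection $z_e$ of $M(A)$ (its range $e(A)$ is an ideal and $\alpha_e$ is the associated corner endomorphism); in particular $A_{gg^*}=z_{gg^*}A$. I would then set, for $p=e_0(1-e_1)\cdots(1-e_n)\in E(G)$, $\beta_p:=$ multiplication by the central projection $z_{e_0}(1-z_{e_1})\cdots(1-z_{e_n})$, and $\alpha_{gp}:=\alpha_g\circ\beta_p$ for $gp\in G_E$. One then checks that this is well defined and a semigroup homomorphism $G_E\to\mathrm{End}(A)$ extending $\alpha$: here one uses that the idempotents of $G$ commute with each other and with $E(G)$, and that conjugation by $g\in G$ carries $G_0$ into $G_0$, so that $\alpha_g\circ\beta_p\circ\alpha_{g^*}$ is again multiplication by a central projection, namely $z_{gpg^*}$, and $e\mapsto z_e$ is multiplicative on the relevant products; with this the homomorphism property for $(g_1p_1)(g_2p_2)=(g_1g_2)\,(g_2^*p_1g_2\,p_2)$ in $G_E$ reduces to $\alpha$ being a homomorphism on $G$. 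Finally the required identity $gg^*(a)\,b=a\,gg^*(b)$ for $g\in G_E$ is immediate since $\alpha_{gg^*}$ is again multiplication by a central projection. The only mildly delicate point is the idempotent bookkeeping showing $g_2^*p_1g_2\in E(G)$ and that $\alpha$ is well defined on $G_E$.

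\emph{Part (iii).} The one fact needed is the identity $g\,b=g(b)\,g$ in $\F$, valid for all $g\in G_E$ and $b\in A$. Indeed, $g^*g$ commutes with $A$ in $\F$ and $gg^*g=g$, so $g\,b=g\,b\,(g^*g)=(g\,b\,g^*)\,g=g(b)\,gg^*\,g=g(b)\,g$, the third equality being the covariance relation $g(b)\,gg^*=g\,b\,g^*$ (which extends from $G$ to $G_E$ by writing $g=g_1p$ and reducing to the $G$-case for $g_1$ together with $bp=(z_pb)p$). From this, $(a\rtimes g)(b\rtimes h)=a\,(g\,b)\,h=a\,g(b)\,g\,h=a\,g(b)\rtimes gh$, and applying the same identity to $h^*$ and $b^*$ gives $(b\rtimes h)^*=h^*b^*=h^*(b^*)\,h^*=h^*(b^*)\rtimes h^*$.

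\emph{Part (i).} Recall that $\pi$ is faithful on $A\rtimes_{alg}G$, and that $A\rtimes_{alg}G=A\rtimes_{alg}G_E$ (each $a\,gp$ expands into a finite $\Z$-combination of elements $a\,(gf)$ with $gf\in G$). By the standardization $a\,gp=gg^*(a)\,gp$, the space $A\rtimes_{alg}G$ is spanned by the standard elements $a\rtimes x$ with $x\in G_E$, $a\in A_{xx^*}$; the content of (i) is that these are linearly independent over the respective $A_{xx^*}$ as $x$ runs over $G_E$, equivalently $A\rtimes_{alg}G=\bigoplus_{x\in G_E}A_{xx^*}$ with $a\rtimes x$ homogeneous of degree $x$. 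Granting this, (i) is immediate, since distinct elements of $G_H$ remain distinct in $G_E$; in the reduction one may in addition first group the $g_i$ according to their common source in $H^{(0)}$, using that the minimal projections $H^{(0)}$ are pairwise orthogonal (this is where finiteness of $H'$ enters). The main obstacle is therefore the linear independence itself, i.e.\ showing that $A\rtimes_{alg}G$ has no relations beyond standardization. I would obtain it from the reduced representations of \cite{khoshkamskandalis2004}: from a faithful representation of $A$, which makes $A$ a $C_0(\hat E)$-algebra over the spectrum $\hat E$ of $E(G)$, one builds the germ groupoid $\calg$ and the regular representations indexed by $\omega\in\hat E$ (together with faithful representations of the fibres $A_\omega$), in which $a\rtimes x$ moves the trivial germ at $\omega$ to the germ of $x$ at $\omega$, weighted by the fibre of $a$ at $x\omega$, and vanishes if $\omega\not\ni x^*x$; reading off the coefficient at a prescribed germ then recovers the corresponding $a$, the point requiring care being that distinct $x,y\in G_E$ may have equal germ at some $\omega$, which one circumvents by evaluating at suitably chosen points of $\hat E$. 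This size computation for the algebraic crossed product is essentially carried out in \cite{burgiDescent}, to which the step may be deferred.
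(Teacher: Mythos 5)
Your parts (ii) and (iii) are correct and follow essentially the same route as the paper: idempotents act as central multipliers, $\alpha$ extends to $E(G)$ and then to $G_E$ (the well-definedness point you flag is exactly the one the paper settles via $gp=hq\Rightarrow g=h$), and the product and adjoint formulas reduce to standardization; your intermediate identity $g\,b=g(b)\,g$ for $g\in G_E$ does hold because every $p\in E(G)$ carries a leading idempotent $e_0$, so the cross terms are killed by $a\rtimes q=\alpha_{qq^*}(a)\rtimes q$.

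Part (i) has a genuine gap. You reduce it to the claim that the standard elements $a\rtimes x$ are linearly independent as $x$ runs over the distinct elements of $G_E$, i.e.\ $A\rtimes_{alg}G=\bigoplus_{x\in G_E}A_{xx^*}$. That claim is false. Take $g\in G$ and $e\in G_0$ with $ge\neq 0$ and $e\leq g^*g$, $e\neq g^*g$, and put $x=g$ and $y=g\cdot(g^*g)(1-e)\in G_E$; these are distinct in $\F$ since $ge\neq 0$. For $0\neq b\in A_{yy^*}=A_{g(1-e)g^*}$ one computes $geg^*(b)=0$, hence
$b\rtimes y=bg-bge=bg-geg^*(b)\,ge=bg=b\rtimes x$,
a nontrivial relation between standard elements at distinct points of $G_E$. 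This also defeats your proposed repair by ``evaluating at suitably chosen points'' of the spectrum of $E(G)$: the two elements coincide identically in $\F$, not merely at some germs, so no choice of evaluation separates them, and whatever is proved in the cited reference must be a differently indexed statement. What rescues (i) is precisely the restriction to $G_H$, which your argument never uses beyond the remark on orthogonality of $H^{(0)}$: after grouping the $g_i$ by their (minimal, pairwise orthogonal) source projections, one writes $g_i=h_i(1-e_1)\cdots(1-e_n)$ with $h_i\in G$ mutually different and observes that a leading term $h_i$ can never equal a lower-order term $h_je_S$ with $S\neq\emptyset$, since that would force $g_i=h_je_S(1-e_1)\cdots(1-e_n)=0$, contradicting $0\notin G_H$. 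Hence the coefficient of each group element $h_i$ in the expansion is exactly $a_i$, and the independence over $G$ itself (from the reduced representations of Khoshkam--Skandalis) gives $a_i=0$. This structural use of $G_H$ is the substance of the paper's proof of (i) and is the missing step in yours.
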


\begin{proof}
(i) 
We may assume that all $g_i$ have the same source projection in $H^{(0)}$
(otherwise multiply $\sum a_i \rtimes g_i$ from the left with each single minimal mutually orthogonal 
projection of $H^{(0)}$).
Hence we may fix $e_{1},\ldots,e_m \in G_0$ such that for all $1 \le i \le n$ we have $g_i= h_i  (1- e_1) \ldots (1- e_m)$
for certain mutually different $h_i \in G$.
Expanding, we get $0=\sum_{i=1}^n a_i g_i = \sum_{i=1}^n a_i h_i - \sum_{i=1}^n a_i h_i e_{1} \pm \ldots$ in $\F$.
Now note that by the reduced representation of the algebraic crossed product $A \rtimes_{alg} G$ in \cite[p. 271]{khoshkamskandalis2004} the 
elements $h_i, h_j e_1$ et cetera in the
last sum are linearly independent, as far as they are all different. 
Certainly, however, we may conclude
that $a_i h_i = 0$ for all $1 \le i \le n$,
because assuming that $h_i = h_j e_{k_1} \ldots e_{k_s}$ would yield $g_i = h_j e_{k_1} \ldots e_{k_s} (1-e_1) \ldots (1-e_m) =0$
(however $0 \notin G_H$).
This yields the claim.

(ii)
By the linear independence of the elements of $G_0$,
it easy to see that we have already a well-defined semigroup homomorphism $\alpha: E(G) \rightarrow \mbox{End} (A)$
defined by $\alpha_e(a)= e(a)$ and $\alpha_{1-e}(a) = a - e(a)$ for all $a \in A$ and $e \in G_0$. 
%
To extend it to $G_E$, we consider an ambiguous representation
$0 \neq g p = h q \in G_E$ for some $h,g \in G$ and $p,q \in E(G)$. Then
$g p q = h p q$ and so $g = h$ by a similar argument as in (i). Thus $g^* g p = g^* g q$ in $E(G)$.
Hence, the definition $\alpha_{g p}:= \alpha_g \alpha_p = \alpha_g \alpha_{g g^* p} = \alpha_{h} \alpha_q$ is well-defined ($\alpha_g$ denotes the given $G$-action).

(iii) Let $g,h \in G$, $p,q \in E(G)$, $a \in A_{g p g^*}$ and $b \in A$. We have, for example, by (\ref{sc2})
for the extended action of (ii), (\ref{definingrelations}) and because $a = g p g^*(a) \in A_{g p g^*}$ that
$$a g p \cdot b h q = g p g^*(a) \cdot g p \cdot b \cdot h q = g p g^*(a) \cdot g(b) \cdot g p\cdot h q =
 a \cdot g p(b) \cdot g p h q$$
%
in $\F$.
\end{proof}


\begin{lemma}   \label{lemma2}
%
If $A$ is a $G$-algebra and $I \subseteq A$ a $G$-invariant ideal in $A$ then $I \rtimes G \subseteq A \rtimes G$
and $I \widehat \rtimes G \subseteq A \widehat \rtimes G$ canonically.
\end{lemma}

\begin{proof}
A representation of $\F(G,I)$ is given by a covariant triple $(\sigma,U,H)$ for some Hilbert space $H$, a $*$-homomorphism
$\sigma: I \rightarrow B(H)$ and an inverse semigroup homomorphism $U: G \rightarrow B(H)$ satisfying the analogous defining relations as in
(\ref{definingrelations}).
Proceed as in
\cite[Lemma A.4]{baumguentnerwillett} 
to show that the representation of $\F(G,I)$ extends to $\F(G,A)$.
%
Hence $\overline{\F(G,I)} \supseteq I \rtimes G \rightarrow A \rtimes G \subseteq \overline{\F(G,A)}$ is isometric.
\end{proof}

\section{The imprimitivity theorem}

We shall now 
introduce an imprimitivity bimodule in the sense of Rieffel \cite{rieffel1974}.

\begin{definition}   \label{defimprimi}
{\rm
We introduce the spaces  
\begin{eqnarray*}
B_0 &=& A \rtimes_{alg} H  := \mbox{span} \{ a\rtimes t \in A \rtimes_{alg} G|\, a \in A_{t t^*}, \, t \in H\}, \\
X_0 &=& \mbox{span} \{ a \rtimes g \in A \rtimes_{alg} G|\, a \in A, g \in G_H\}, \\
E_0 &=& \big( A \otimes C_0(G_H/H) \big) \rtimes_{alg} G .
\end{eqnarray*}
The spaces $B_0 \subseteq A \rtimes G$ and $E_0$ are regarded as pre-$C^*$-algebras.
We make $X_0$ to a right pre-Hilbert module over $B_0$ (cf. \cite[Def. 2.8]{rieffel1974}) by the following operations
\begin{eqnarray*}
&&X_0 \times B_0 \longrightarrow X_0: \quad (a \rtimes g)(c \rtimes t) \,\,:=\,\, a g(c) \rtimes g t ,  \\
&&X_0 \times X_0 \longrightarrow B_0: \quad \langle a \rtimes g, b \rtimes h \rangle_{B_0} \,\,:= \,\, [g^* h \in H] \,\,g^*(a^* b) \rtimes g^* h
\end{eqnarray*}
for $a,b \in A$, $c \in A_{t t^*}$, $g,h \in G_H$ and $t \in H$,
and to a left pre-Hilbert module over $E_0$ by
\begin{eqnarray*}
&&E_0 \times X_0 \longrightarrow X_0: \quad
(a \otimes r \rtimes s) (b \rtimes j) \,\,  := \,\, \, [s j \in G_H] \, [r \equiv s j]  \,\, a  s(b) \rtimes s j  , \\
&&X_0 \times X_0 \longrightarrow E_0: \quad
\langle a \rtimes g, b \rtimes h \rangle_{E_0} \,\,:=   \,\,  
a \, gh^*(b^*) \otimes  g \rtimes g h^*
\end{eqnarray*}
for $a,b \in A$, $r \in G_H/H$, $s \in G$ and $j, g, h \in G_H$.
}
\end{definition}

Because of identity (\ref{sc1}) we may write every element in an algebraic crossed product
as the sum of standard elements.
Because of the linear independence statement of Lemma \ref{lemma1}.(i)
we may then extend the formulas of Definition \ref{defimprimi} for standard expressions
by linearity.
We need however remark, that


\begin{lemma}
The formulas of Definition \ref{defimprimi}
remain however valid also for non-standard expressions as stated.
\end{lemma}

\begin{proof}
For example, by considering the 
formula of the map $E_0 \times X_0 \rightarrow X_0$,
given elementary elements $a \otimes r \rtimes s s^*s \in E_0$ and $b \rtimes j p \in X_0$
with $a \in A, r \in G_H/H, s,j \in G$ and
$s^* s, p \in E(G)$,
we go over to their
standard form 
$[s s^* r \in G_H] s s^*(a) \otimes s s^* r \rtimes s \in E_0$ and $j j^*(b) \rtimes j p \in X_0$
according to (\ref{sc1}).
Then their module product in $X_0$ is
\begin{eqnarray*}
&& [s s^* r \in G_H] \, [s j p \in G_H] \, [ s s^* r \equiv s j p]  \,\, s s^*(a)  s( j j^*(b)) \rtimes s j p \\
&=& [s j p \in G_H] \, [r \equiv s j p]  \,\, s j j^* s^* (a)  \, s j j^* s^* s(b) \rtimes s j p   \\
&=& [s j p \in G_H] \, [r \equiv s j p]  \,\, a s(b) \rtimes s j p
\end{eqnarray*}
by (\ref{sc2}),  
(\ref{sc1})
and because $[s s^* r \in G_H]$ cancels because $s s^* r \equiv s j p$ implies $s s^* r = s j p t$ for some $t \in H^{(0)}$, 
implies $s s^* r = r$ 
(since the source projection of $s s^* r = s j p t \in G_H$ is in $H^{(0)}$ and thus cannot become smaller than the one
of $r \in G_H$),
implies $s s^* r = r \in G_H$. 
This is the same element
as taking the module product formula in $X_0$ for the given non-standard elements 
$a \otimes r \rtimes s \in E_0$ and $b \rtimes j p \in X_0$.
%
%
The above formulas for non-standard expressions can then be extended also linearly.
%
%
%
\end{proof}

\begin{proposition}   \label{straightprop}
Straightforward, but again rather time-consuming, extensive and tedious computations show that we have
\begin{eqnarray}
&&\langle x,y b \rangle_{B_0} = \langle x,y \rangle_{B_0} b , \quad \langle x,y \rangle_{B_0}^* = \langle y,x \rangle_{B_0},  \nonumber \\
&&\langle f x,y \rangle_{E_0} = f \langle x,y \rangle_{E_0} , \quad \langle x,y \rangle_{E_0}^* = \langle y,x \rangle_{E_0},  \nonumber  \\
&&\langle f x,y \rangle_{B_0} =  \langle x, f^* y \rangle_{B_0} , \quad \langle x, y b \rangle_{E_0} = \langle x b^*,y \rangle_{E_0},
\quad x \langle y,z \rangle_{B_0} = \langle x,y\rangle_{E_0} z  \label{eq8}
\end{eqnarray}
for all $x,y,z \in X_0$, $b \in B_0$ and $f \in E_0$ (cf. \cite[Def. 6.10]{rieffel1974}).
%
\end{proposition}

\begin{proof}
For convenience of the reader we demonstrate 
the first identity of line (\ref{eq8}), which is the most
difficult of all of these, and the 
others should not present further new difficulties (for the $*$-algebraic operations in $B_0$
use the formulas of Lemma \ref{lemma1}.(iii)). 
We have
\begin{eqnarray}
&& \big \langle (a \otimes r \rtimes s) ( b \rtimes g), c \rtimes h \big  \rangle_{B_0}  \nonumber \\
&=& [s g \in G_H] \, [r \equiv s g]  \,\, \big \langle
 a  s(b) \rtimes s g 
, c \rtimes h \big  \rangle_{B_0}  \nonumber \\
&=&   [s g \in G_H] \,\,[r \equiv s g]  \,\, [g^* s^* h \in H]  \,\, g^* s^* \big ( s(b^*) a^* c \big )   \rtimes g^* s^* h   \label{x4}
\end{eqnarray}
for $a,b,c \in A$, $r \in G_H/H$, $s \in G$ and $g,h \in G_H$, and
\begin{eqnarray}
&& \big \langle  b \rtimes g, (a \otimes r \rtimes s)^* (c \rtimes h) \big \rangle_{B_0}  \nonumber \\
&=& \big \langle  b \rtimes g, \big([s^* r \in G_H] \,\,s^*(a^*) \otimes s^* r \rtimes s^* \big) (c \rtimes h) \big \rangle_{B_0} \nonumber \\
&=& [s^* r \in G_H] \,\, \, [s^* h \in G_H] \, [ s^* r \equiv s^* h]  \,\, 
\big \langle  b \rtimes g, 
 s^*( a^*)  s^*(c) \rtimes s^* h  \big \rangle_{B_0} \nonumber \\
&=& [g^* s^* h \in H] \,\, [s^* r \equiv s^* h] \,\,[s^* h \in G_H] \,\,[s^* r \in G_H] \,\,g^*\big (b^* s^*(a^*) s^*(c) \big) \rtimes g^* s^* h . \label{x5}
\end{eqnarray}

We only need to show that the scalar coefficients of the expressions 
(\ref{x4}) and (\ref{x5}) coincide, because observe that the vector parts
coincides by a single application of identity (\ref{sc2}).
Note that the scalar $[g^* s^* h \in H]$ appears both in (\ref{x4}) and (\ref{x5}).
Suppose that (\ref{x4}) is nonzero. Then
$sg \in G_H$, and thus $s^* s \ge g g^*$ 
because $g \in G_H$ and $s g \in G_H$ (the source projection of $g$ is in $H^{(0)}$ and cannot be made smaller in $s g$).
On the other hand, $r \equiv sg$ and so there exists some $t \in H$ such that $r = sg t$. Hence,
$s^* r = s^* s g t = g t \in G_H$.
Thus $[s^* r \in G_H]$ appearing in (\ref{x5}) is nonzero.
Since $g^* s^* h \in H$, both its source and range projection are in $H^{(0)}$. 
Hence, since $h \in G_H$,
$s^* h$ must be in $G_H$ too in order not to loose information on the source projection of $s^* h$. Since $g \in G_H$, the source projection of $g^*$ and the range projection of $s^* h$ must perfectly fit together such that $g^* s^* h \in H$.
But this implies that $g \cdot(g^* s^* h) = s^* h$ and hence $g \equiv s^* h$.
This implicates $s^* r \equiv g \equiv s^* h$.
We have obtained that $[s^* r \equiv s^* h]\,[s^* h \in G_H]$ appearing in (\ref{x5}) is nonzero.
Hence (\ref{x5}) is nonzero.
In the other direction suppose that (\ref{x5}) is nonzero.
Since $g^* s^* h \in H$ and $g,h \in G_H$, we can completely analogously deduce as before that
$s g \equiv h$. This already implies that $[s g \in G_H]$ appearing in (\ref{x4}) is nonzero.
Since $s^* h, s^* r \in G_H$ and $h,r \in G_H$, we get $s s^* \ge h h^*, r r^*$.
Hence $s^* r \equiv s^* h$ implies $r \equiv h \equiv s g$.
Thus (\ref{x4}) is nonzero.
\end{proof}

%



\begin{lemma}	
The inner products of $X_0$ are positive.
\end{lemma}	

\begin{proof}
Let $(a_\alpha)_\alpha$ be an approximate identity of $A$.
Let $x=\sum_{s=1}^m b_s \rtimes h_s$ in $X_0$ and choose for every {\em different} 
equivalence class $h_s H$ in $G_H/H$
exactly one representative $g_i := h_s \in G_H$, where $1 \le i \le n$ with $n \le m$.
(We have a new index $i$ because for $h_{s_1} H= h_{s_2} H$ we would choose a common $g_i$.)
Set $x_{i,\alpha} = a_\alpha \rtimes g_i \in X_0$.
Set $x_\alpha = \sum_{i=1}^n \langle x_{i,\alpha},x_{i,\alpha} \rangle_{E_0} x \in X_0$.
Then
\begin{eqnarray}
x_\alpha &=&  
\sum_{i=1}^n \sum_{s=1}^m ( a_\alpha g_i g_i^*(a_\alpha^*) \otimes g_i \rtimes g_i g_i^*)
(b_s \rtimes h_s)  \nonumber  \\ 
&=& \sum_{i,s}\,\, [g_i g_i^* h_s \in G_H] \, [g_i \equiv g_i g_i^* h_s]  \,\, a_\alpha g_i g_i^*(a_\alpha^*)\,
 g_i g_i^*(b_s) \rtimes g_i g_i^* h_s  \nonumber   \\
&=& \sum_{i,s}\,\, [g_i \equiv h_s]  \,\, a_\alpha g_i g_i^*(a_\alpha^*)\,
 g_i g_i^*(b_s) \rtimes g_i g_i^* h_s   \label{x12}  \\
&=& \sum_{s=1}^m \,\, h_s h_s^*( a_\alpha a_\alpha^*)\,
 b_s \rtimes h_s \label{x14} 
\end{eqnarray}
where identity (\ref{x12}) follows from the fact that $g_i g_i^* h_s \in G_H$ and $h_s \in G_H$ implies
$g_i g_i^* \ge h_s$, and so $g_i \equiv g_i g_i^* h_s$ implies $[g_i \equiv h_s] \neq 0$, which, on the other hand,
implies $g_i g_i^* = h_s h_s^*$ and thus $[g_i g_i^* h_s \in G_H] \, [g_i \equiv g_i g_i^* h_s] \neq 0$.
Identity (\ref{x14}) follows because we have chosen for each $h_s$
one but at most one equivalent $g_i$.
We used also (\ref{sc2}) and Lemma \ref{lemma1}.(ii) there.
Also (\ref{sc2}) is used to easily compute that $\langle x, x-x_\alpha\rangle_{B_0} \rightarrow 0$.
%
Consequently,
\begin{eqnarray*}
\langle x, x \rangle_{B_0} &=&
\lim_\alpha \langle x, x_\alpha \rangle_{B_0} 
=
\lim_\alpha  \sum_{i=1}^n \Big \langle x,  \langle x_{i,\alpha},x_{i,\alpha} \rangle_{E_0} x \Big \rangle_{B_0} \\
&=&
 \lim_\alpha \sum_{i=1}^n \langle x,x_{i,\alpha}\rangle_{B_0} \langle x,x_{i,\alpha}\rangle_{B_0}^* \; \ge\; 0
\end{eqnarray*}
as in Green \cite{green1978}, page 202, with the last identity of (\ref{eq8}).
The argument for the positivity of $\langle x, x\rangle_{E_0}$ is similar.
Here we choose, for example, $x_\alpha = x \sum_{e \in H^{(0)}}
\langle a_\alpha \rtimes e, a_\alpha \rtimes e \rangle_{B_0}$.
\end{proof}



\begin{proposition}
We have the inequalities
\begin{equation}   \label{normineq}
\langle f x, f x \rangle_{B_0} \le \|f\|^2_{E_0}  \langle x, x \rangle_{B_0}, 
\quad
\langle x b, x b \rangle_{E_0} \le \|b\|^2_{B_0}  \langle x, x \rangle_{E_0}
\end{equation}
for all $x \in X_0$, $f \in E_0$ and $b \in B_0$ (cf. \cite[Def. 6.10]{rieffel1974}).
\end{proposition}

\begin{proof}
For the proof of the first inequality regard $X_0$ as an pre-Hilbert $B_0$-module.
Let $\calm(A)$ denote the multiplier algebra of $A$.
For a nonzero standard element $f=a \otimes r \rtimes s \in E_0$ 
we compute $f^* f$ with the formulas of Lemma \ref{lemma1}.(iii),	
and for another $x \in X_0$ we obtain, with Proposition \ref{straightprop}, 
\begin{eqnarray*}
&& \|f \|_{E_0}^2 \langle x , x \rangle_{B_0} - \langle f x , f x \rangle_{B_0}
 \; = \;  \big \langle (\|f \|_{E_0}^2 - f^* f)x , x \big \rangle_{B_0}   \\
&=&  \big \langle \big(\|f \|_{E_0}^2 - s^*(a^* a ) \otimes s^* r \rtimes s^* s \big) x , x \big \rangle_{B_0}   \\
&=&   \big \langle  z x , z x \big \rangle_{B_0}  + \big \langle  (1-p)x ,(1-p) x \big \rangle_{B_0}
 \ge 0 ,
\end{eqnarray*}
where
$z := \big (\|f \|_{E_0}^2 - s^*(a^* a ) \big )^{1/2} \otimes s^* r \rtimes s^* s$ and
$p := \|f \|_{E_0}^2 \otimes s^* r \rtimes s^* s$ are elements in $\big (\calm(A) \otimes C_0(G_H/H) \big) \rtimes G$.
Of course, we had here temporarily to replace our coefficient algebra $A$ by $\calm(A)$ in order to include
$\|f\|_{E_0}$ and have therefore some slightly larger new $E_0$.
(Note that for general $f \in E_0$, $(\|f \|_{E_0}^2 - f^* f)^{1/2}$ need not be in $E_0$ and that is why we need
to consider elementary elements $f$.)
%

By applying the norm $\|\cdot\|_{B_0}$ in $B_0$ to this inequality, 
we obtain
$\|f x \| \le \|f\|_{E_0} \, \|x\|$ (where $\|x\|:= \|\langle x,x\rangle_{B_0}\|^{1/2}$) for
such elementary elements $f \in E_0$, and by taking sums of such elements
we readily obtain $\|f x \| \le \|f\|_{\ell^1 (G,A \otimes C_0(G_H/H) )} \, \|x\|$ for all $f \in E_0$.
Hence, the $E_0$-module multiplication on $X_0$ is a $*$-homomorphism $E_0 \rightarrow \call(X_0)$ which is an $\ell^1$-contractive representation into a pre-$C^*$-algebra.
Since by definition the $C^*$-norm closure of $E_0$ is the enveloping $C^*$-algebra of $\ell^1(G,A \otimes C_0(G_H/H) )$ (cf. \cite{khoshkamskandalis2004}) and so induced by the sum over all $\ell^1$-contractive representations,
we must get $\|f\|_{\call(X_0)} \le \|f\|_{E_0}$.
It is well known from the theory of Hilbert-modules that one has $\langle f x, f x \rangle_{B_0} \le \|f\|^2_{\call(X_0)}  \langle x, x \rangle_{B_0}$ for adjoint-able operators $f$ (see for instance Lance \cite{lance}, Prop. 1.2), and hence the first inequality
of (\ref{normineq}).
The second inequality of (\ref{normineq}) is proved similarly (but is easier as $B_0$ is even norm-closed).
\end{proof}

%




\begin{definition}
{\rm
Denote by $E_X \subseteq \overline{E_0}$ the norm closure of $\langle X_0,X_0\rangle_{E_0}$
under the $C^*$-norm $\|\cdot\|_{E_0}$, and by $B_X \subseteq \overline{B_0}$
the norm closure of $\langle X_0,X_0\rangle_{B_0}$ under the $C^*$-norm $\|\cdot\|_{B_0}$.
We now apply the argument following \cite[Prop. 3.1]{rieffel1979}
to see that $X_0$ may be completed in semi-norm $\|x\| = \|\langle x,x\rangle_{B_0}\|^{1/2}$
(after factoring out the elements of norm $0$) to obtain an $E_X-B_X$ imprimitivity bimodule $X$.
}
\end{definition}

%
%

\begin{theorem}    \label{theoremImprimitivity}
Let $H'$ be a finite sub-inverse semigroup of an inverse semigroup $G$
and denote by $H$ its associated finite groupoid (Definition \ref{defH}).
Let $A$ be a $G$-algebra.
Then we have a $C^*$-algebraic Morita equivalence
$$C_0(G_H/H,A) \widehat \rtimes G \quad \sim_M  \quad  A \widehat \rtimes H'$$
via the $E_X-B_X$ imprimitivity bimodule $X$ and isomorphisms
$E_X \cong C_0(G_H/H,A) \widehat \rtimes G$ and $B_X \cong A \widehat \rtimes H'$.
%
\end{theorem}

\begin{proof}
The finite dimensional $C^*$-algebra $B_X = B_0$ is canonically isomorphic to the groupoid crossed product $A \rtimes H$, which is canonically isomorphic to the inverse semigroup crossed product $A \widehat \rtimes H'$ by \cite[Thm. 7.2]{0992.46051}.
To meet exactly the assumptions in \cite{0992.46051}, switch 
to the {\em carrier algebra} $\tilde A = p(A)$ for $p = \sum_{e \in H^{(0)}} e$ of $A$,
which does not change the crossed product, that is,
$\tilde A \widehat \rtimes H' = A \widehat \rtimes H'$. 

Denote by $C_0(G_H/H,A) \subseteq  A \otimes C_0(G_H/H)$
the norm closure of the linear span of
$$\{a \otimes r \in A \otimes C_0(G_H/H)|\, a \in A_{r r^*}, r \in G_H/H\}.$$
Note that $C_0(G_H/H, A)$ is a $G$-invariant ideal in $A \otimes C_0(G_H/H)$ and so
\begin{equation}  \label{x15}
C_0(G_H/H,A) \widehat \rtimes G \; \subseteq \; (A \otimes C_0(G_H/H)) \widehat \rtimes G
\end{equation}
embeds by Lemma \ref{lemma2}.
%
Using (\ref{x15}), let 
\begin{equation}   \label{x17}
\sigma: E_X \longrightarrow C_0(G_H/H,A) \widehat \rtimes G \;:\; \sigma(a \otimes r \rtimes g) = a \otimes r \widehat
\rtimes g
\end{equation}
be the canonical map ($a \in A, r \in G_H/H$ and $g \in G$). (Note that there is always a canonical map $A \rtimes G \rightarrow A \widehat \rtimes G$.)
%
It is surjective, because given a nonzero elementary element $a a^* \otimes r \widehat \rtimes g$
in $C_0(G_H/H,A) \widehat \rtimes G$
with $a \in A_{r r^*}, r \in G_H/H$ and $g \in G$
we note that
$$a a^* \otimes r \widehat \rtimes g = r r^*(a a^* \otimes r) \widehat \rtimes g g^* g
= g g^* (a a^*) \otimes g g^* r \widehat \rtimes  r r^* g
$$
by the permeability (compatibility) of $\widehat \rtimes$ for projections in $E(G)$, see (\ref{compat}),
so that we may assume that the given element
$a a^* \otimes r \widehat \rtimes g$ satisfies $a \in A_{r r^*}, r \in G_H/H$ and $g \in G_E$
with $r r^* = g g^*$.
Hence we get
%
%
$$a a^* \otimes r \widehat \rtimes g
=  \sigma \big ( \langle a \rtimes r, g^*(a) \rtimes g^* r \rangle_{E_0} \big ).$$


If $\sigma$ were not injective, then its kernel $J \subseteq E_X$ were nonzero, and so would correspond to a nonzero ideal
$I$ in $B_X$ via the $E_X-B_X$ imprimitivity module $X$ (see \cite[Cor. 3.1]{rieffel1979}), which then would contain a nonzero element of the form $a \rtimes e \in I$ with $e \in H^{(0)}$.
A nonzero element of the form $f=\langle a \rtimes e,a \rtimes e \rangle_{E_0}$ would be in $J$, however $\sigma$ is nonzero on $f$.
We have obtained our result. 
\end{proof}

\begin{definition}
{\rm
Now assume that $D$ is a $H'$-algebra. Define, similarly as in \cite[\S 5 Def. 2]{kasparov1995},
\begin{eqnarray*}
\mbox{Ind}_{H'}^G(D)  &:=&  \{ f: G_H \rightarrow D \, | \,\, \forall g \in G_H, t \in H \mbox{ with } gt \in G_H: f(g t) = t^*(f(g)),\\ 
&& \qquad \|f(g)\| \rightarrow 0 \mbox{ for } g H \rightarrow \infty \mbox{ in } G_H/H \, \}.
\end{eqnarray*}
It is a $C^*$-algebra under the pointwise operations and the supremum's norm and becomes a $G$-algebra under
the $G$-action
$(g f)(h) := [g^* h \in G_H ] \,\,f ( g^* h)$
for $g \in G$, $h \in G_H$ and $f \in \mbox{Ind}_{H'}^G(D)$. 
}
\end{definition}

\begin{corollary}  \label{greenImp2}
Let $H' \subseteq G$ be a finite sub-inverse semigroup of an inverse semigroup $G$. 
Let $D$ be a $H'$-algebra.
Then we have a $C^*$-algebraic Morita equivalence
$$\mbox{Ind}_{H'}^G(D) \widehat \rtimes G \quad \sim_M \quad D \widehat \rtimes H'.$$
\end{corollary}

\begin{proof}
Let $A$ denote $\mbox{Ind}_{H'}^G(D)$.
Consider the $H'$-invariant ideal $A_0$ of $A$ consisting of all functions which 
vanish outside $H$.
Let us again view $B_X$ as $B_X = A \rtimes H$ as in the last proof before.
Then $A_0 \rtimes H$ embeds canonically as an ideal $J$ in $A \rtimes H=B_X$, and
by \cite[Cor. 3.1]{rieffel1979}, associated to $J$ is the submodule in $X$
generated by
$$Y_0= \{y \in X_0|\, \langle y,y\rangle_{B_X} \in J\} = \mbox{span}\{ g(a) \rtimes g \in X_0 | \,a \in A_0, g \in G_H\}  \; \subseteq \; X,$$
and the ideal $I$
in $E_X$
generated by (and actually norm closure of)
$$\langle Y_0,Y_0\rangle_{E_X}
= \mbox{span}\{ g (a) \otimes g \widehat \rtimes g h^* \in E_X| \, a\in A_0, \, g,h \in G_H\} \; \subseteq \; E_X.$$

Note that
we are identifying
$E_X \cong C_0(G_H/H,A) \widehat \rtimes G$ 
by the isomorphism $\sigma$ stated in (\ref{x17}).
Using Lemma \ref{lemma2},
the ideal $I$ is canonically 
isomorphic to $K \widehat \rtimes G  \subseteq
E_X$, where $K$ denotes the $G$-invariant
ideal in $C_0(G_H/H,A)$
which is the norm closure of the linear span of
$$\{ g(a) \otimes g \in C_0(G_H/H,A) | \,a \in A_0, g \in G_H/H\}.$$
To see that the 
identical embedding
$I \rightarrow K \widehat \rtimes G$ is surjective,
write a given nonzero element $g(a) \otimes g \widehat \rtimes s \in K \widehat \rtimes G$
($a \in A_0, g \in G_H/H$ and $s \in G$)
as
$$s s^* g(a) \otimes s s^* g \widehat \rtimes s s^* g g^* s \quad \in I$$
with $s s^*g, s^* g \in G_H$
by the compatibility of $\widehat \rtimes$, see (\ref{compat}).
%
%
%
%
%
%
%
%
%
We have a $G$-equivariant isomorphism $\psi:A \rightarrow K$
defined by
$$\psi(f) = \sum_{g \in G_H/H} 
f|_{g} \otimes g = \sum_{g \in G_H/H} g \big (g^*(f)|_{g^* g} \big) \otimes g \quad\in K,$$
where $f \in A = \mbox{Ind}_{H'}^G(D)$ and
$f|_{g}\in A$ denotes the function $f|_{g}(k)= [k \equiv g]f(k)$ for all $k \in G_H$.
%

There is a $H'$-equivariant epimorphism
$\Phi: D \rightarrow A_0$ given by $\Phi  (d )(t) = t^* (d)$
for $t \in H$ and $d \in D$. It is an isomorphism on the carrier algebra of $D$, so that $D \widehat \rtimes H' \cong A_0 \widehat \rtimes H' \cong A_0 \rtimes H \cong J$.
Consequently we have obtained, by restricting to the ideals $I$ and $J$ in Theorem \ref{theoremImprimitivity}
and applying \cite[Cor. 3.1]{rieffel1979}, our result.
\end{proof}


{\bf Acknowledgement.}
We thank the 
the Universidade Federal de Santa Catarina in Florian\'opolis
for the support we received when developing the content
of this paper in 2014. 
This note presents a slightly modified version (more detailed proofs)
of a preprint in arXiv
from 2014.

\bibliographystyle{plain}
\bibliography{references}

\end{document}